\newtheorem{Theorem}{Theorem}[section]
\newtheorem{Remark}[Theorem]{Remark}
\newcommand\emp\varnothing
\newcommand\eps\varepsilon
\newcommand\ov\widetilde
\def\^#1{^{\overline{#1}}}
\newcommand{\email}[1]{\href{mailto:#1}{#1}}
\newcommand{\Addresses}{{
  \bigskip
  \footnotesize

  O.\,P. Khromova,\par\nopagebreak \textsc{Altai State University}\par\nopagebreak \textsc{Barnaul, Russia}\par\nopagebreak
  \textit{E-mail address:} \email{khromova.olesya@gmail.com}

  \medskip

  S.\,V. Klepikova,\par\nopagebreak \textsc{Altai State University}\par\nopagebreak \textsc{Barnaul, Russia}\par\nopagebreak
  \textit{E-mail address:} \email{klepikova.svetlana.math@gmail.com}
  
  \medskip

  E.\,D. Rodionov,\par\nopagebreak \textsc{Altai State University}\par\nopagebreak \textsc{Barnaul, Russia}\par\nopagebreak
  \textit{E-mail address:} \email{edr2002@mail.ru}

}}
\begin{document}

\title{On Sectional Curvature Operator \\of 3-dimensional Locally Homogeneous \\Lorentzian Manifolds\thanks{The reported study was funded by RFBR according to the research project \textnumero~18--31--00033 mol\_a.}}

\author{O.\,P. Khromova, S.\,V. Klepikova, E.\,D. Rodionov \\
}


\date{}

\maketitle

\begin{abstract}
The main purpose of this paper is to determine the admissible forms of the~sectional curvature operator on a three-dimensional locally homogeneous Lorentzian manifolds. 
\\
{\it Keywords: locally homogeneous Lorentzian manifold, sectional curvature operator, Segre type.}
\end{abstract}

\allowdisplaybreaks

\section{Introduction}

\sloppy
The problem of the restoring of (pseudo)Riemannian manifold on prescribed curvature operator is the actual direction in the research of curvature operators. The Riemannian locally homogeneous spaces with the prescribed values of the spectrum of Ricci operator have been identified by O.\,Kowalski and S.\,Nikcevic in~\cite{KlepikovaSV:Koval-Ni1996}. The problem of~the existence of~locally homogeneous Lorentzian manifold and prescribed Ricci operator was investigated by G.\,Calvaruso and O.\,Kowalski~in~\cite{KlepikovaSV:CalKow2009}. There are also some papers about this problem in nonhomogeneous case (see~\cite{KlepikovaSV:Calvaruso2008,KlepikovaSV:Kowalski1993}).

Similar results were obtained by D.N.~Oskorbin, E.D.~Rodionov, O.P. Khromova for~the~one-dimensional curvature operator and the sectional curvature operator in~the~case of three-dimensional Lie groups with left-invariant Riemannian metrics~\cite{KlepikovaSV:OR2013,KlepikovaSV:GO2013}.


The main purpose of this paper is to consider the problem of the precribed  sectional curvature operator $\mathcal{K}$ on the three-dimensional Lorentzian locally homogeneous manifolds. 

Unlike the case of the Riemannian metric, there always exist an orthonormal basis, in~which the matrix of the curvature operator is diagonal, in the case of Lorentzian metric different cases can occur known as \textit{Segre types} (see.~\cite{KlepikovaSV:Segre_types_2000}). Namely, the following cases can~occur:
\begin{enumerate}
\item Segre type $\{111\}$: the operator $\mathcal{K}$ has three real eigenvalues (possibly coincident), each associated to a one-dimensional eigenspace.
\item Segre type $\{1z\overline{z}\}$: the operator $\mathcal{K}$ has one real and two complex the conjugate eigenvalues.
\item Segre type $\{21\}$: the operator $\mathcal{K}$ has two real eigenvalues (possibly coincident), the first of which has algebraic multiplicity 2, each associated to a one-dimensional eigenspace.
\item Segre type $\{3\}$: the operator $\mathcal{K}$ has one real eigenvalue of algebraic multiplicity 3, associated one-dimensional eigenspace.
\end{enumerate}

\section{Three-dimensional homogeneous Loretzian manifolds}


Let $(M,g)$ be three-dimensional homogeneous manifold, with the Lorenzian metric $g$ of~signature $\left(+,+,-\right)$. We denote by $\nabla$ its Levi-Civita connection and by $R$ its curvature tensor, which defined by following
$$R\left(X,Y\right)Z=\left[\nabla_Y,\nabla_X\right]Z+\nabla_{\left[X,Y\right]}Z.$$ 

The Lorenzian metric $g$ induces a scalar product $\langle\cdot,\cdot\rangle$ in the bundle $\Lambda^2 M$ by the rule
$$ \left\langle X_1 \wedge X_2, Y_1 \wedge Y_2 \right\rangle = \mathrm{det}\left(\left\langle X_i,Y_i\right\rangle\right). $$
The curvature tensor $R$ at any point can be considered as an operator  $\mathcal{K} \colon \Lambda^2 M \to \Lambda^2 M$,
called the \textit{sectional curvature operator} and defined by the equation
\begin{equation*}\label{eq:def_sec_curv}
\left\langle X \wedge Y,\mathcal{K}\left(Z \wedge T\right)\right\rangle = R\left(X, Y, Z, T\right).
\end{equation*}

The studying of curvature operators on three-dimensional locally homogeneous Lorentzian spaces is based on the following fact, which was proved by G.\,Calvaruso in~\cite{KlepikovaSV:Calvaruso2007}.

\begin{Theorem}
Let $(M,g)$ be a three-dimensional connected, simply connected, complete locally homogeneous Lorentzian manifold. Then, either $(M,g)$ is locally symmetric, or it is~locally isometric to a three\nobreakdash-dimensional Lie group equipped with a left-invariant Lorentzian metric.
\end{Theorem}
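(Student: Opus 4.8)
The plan is to realize $(M,g)$ as a homogeneous space and then classify the possibilities through the dimension of the isotropy. Since $(M,g)$ is connected, simply connected and complete, the pseudogroup of local isometries integrates to a genuine transitive Lie group $G$ of isometries, so that $M = G/H$ with $H$ the connected isotropy subgroup; write $\mathfrak{g}$ for the Lie algebra of Killing fields and $\mathfrak{h} \subset \mathfrak{g}$ for the isotropy subalgebra at a fixed point $p$. Local transitivity gives $\dim\mathfrak{g} = 3 + \dim\mathfrak{h}$, and the isotropy representation $\mathfrak{h} \to \mathfrak{so}(T_pM, g_p) \cong \mathfrak{so}(2,1)$ is faithful, so $\dim\mathfrak{h} \le 3$.

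First I would dispose of the extreme values of $\dim\mathfrak{h}$. If $\dim\mathfrak{h} = 0$, then $\dim\mathfrak{g} = 3$ and $G$ acts simply transitively, so $M$ is locally a Lie group with a left-invariant Lorentzian metric. If $\dim\mathfrak{h} = 3$, then $\mathfrak{h}$ is all of $\mathfrak{so}(2,1)$; the isotropy then acts transitively on the pseudo-spheres of each causal type, which forces the sectional curvature to be constant and hence $\nabla R = 0$, i.e. $M$ is locally symmetric. The value $\dim\mathfrak{h} = 2$ (so $\dim\mathfrak{g} = 5$) I would rule out using the classical lacunarity (gap) theorem for isometry groups: an effective isometric action on a $3$-dimensional (pseudo-)Riemannian manifold admits no transitive group of dimension $\binom{4}{2}-1 = 5$; equivalently, no $2$-dimensional subalgebra of $\mathfrak{so}(2,1)$ can occur as such an isotropy.

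This leaves the single genuinely delicate case $\dim\mathfrak{h} = 1$, $\dim\mathfrak{g} = 4$, which is where the bulk of the work lies. Here the generator $Z$ of $\mathfrak{h}$ maps to a nonzero element of $\mathfrak{so}(2,1)$, and up to conjugacy it is of one of three causal types: elliptic (a spacelike rotation), hyperbolic (a boost), or parabolic (a null rotation). For each type I would fix a frame adapted to $Z$, decompose $\mathfrak{g} = \mathfrak{h} \oplus \mathfrak{m}$ with $\mathfrak{m}$ projecting isomorphically onto $T_pM$, and exploit the Jacobi identity together with the fact that $\mathrm{ad}(Z)|_{\mathfrak{m}}$ is skew for $g_p$ to pin down the bracket relations of $\mathfrak{g}$. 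The aim in each subcase is a dichotomy: either $\mathfrak{g}$ contains a $3$-dimensional subalgebra $\mathfrak{k}$ transverse to $\mathfrak{h}$, whose integral subgroup then acts simply transitively and exhibits $M$ as a Lie group with left-invariant metric; or no such transverse subalgebra exists, in which case the rigidity of the brackets forces the curvature to be isotropy-invariant with vanishing covariant derivative, so that $M$ is locally symmetric.

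I expect the parabolic (null-rotation) subcase to be the main obstacle. Because the corresponding one-parameter isotropy is not semisimple and the isotropy group is noncompact, $\mathrm{ad}(Z)$ acts as a nilpotent rather than a diagonalizable operator, the reductive complement $\mathfrak{m}$ need not split conveniently, and the degenerate null directions make both the search for a transverse subalgebra and the alternative verification of $\nabla R = 0$ considerably more involved than in the elliptic and hyperbolic cases. Once all three subcases are settled and combined with the extreme values of $\dim\mathfrak{h}$ treated above, every possibility yields either a locally symmetric space or a Lie group with left-invariant Lorentzian metric, which is the assertion.
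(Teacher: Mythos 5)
The first thing to say is that the paper contains no proof of this statement to compare yours against: the theorem is imported verbatim from Calvaruso's article (reference [8] in the bibliography) as a known structural fact on which the rest of the text relies. Judged on its own terms, your proposal is a reasonable strategy but not yet a proof, and two of its preliminary steps are shakier than you present them. The opening move --- integrating the local isometries to a globally transitive group $G$ so that $M=G/H$ --- is the Riemannian (Singer-type) argument and does not transfer automatically to Lorentzian signature: local homogeneity plus completeness and simple connectedness need not give global homogeneity for indefinite metrics, which is precisely why the theorem concludes only ``locally isometric'' and ``locally symmetric''. The repair is to work throughout with the transitive Lie algebra of germs of local Killing fields at a point, but as written your first paragraph proves too much. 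Likewise, your exclusion of $\dim\mathfrak{h}=2$ via the classical gap theorem needs care: the Riemannian proof of that gap rests on $\mathfrak{so}(3)$ having no $2$-dimensional subalgebras, whereas $\mathfrak{so}(2,1)\cong\mathfrak{sl}(2,\mathbb{R})$ does have one (the stabilizer of a null line). To kill this case you must actually check that a curvature tensor invariant under that $2$-dimensional subalgebra is invariant under all of $\mathfrak{so}(2,1)$, forcing constant curvature and hence $3$-dimensional isotropy --- a short computation, but one that cannot be waved through by citing the Riemannian statement.

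The genuine gap is the case $\dim\mathfrak{h}=1$, which you correctly identify as carrying all the weight: there the dichotomy ``either $\mathfrak{g}$ contains a $3$-dimensional subalgebra transverse to $\mathfrak{h}$, or the metric is locally symmetric'' \emph{is} the content of the theorem, and your proposal announces it rather than establishes it. Nothing in the outline explains why, when no transverse subalgebra exists, one necessarily gets $\nabla R=0$; this is the heart of the matter, above all in the parabolic (null-rotation) subcase you flag, where the isotropy acts nilpotently and invariant complements are unavailable. For comparison, the cited source takes a different route entirely: Calvaruso classifies the homogeneous Lorentzian structures $T$ in the sense of Ambrose--Singer, as extended to pseudo-Riemannian metrics by Gadea and Oubi\~{n}a, on three-dimensional Lorentzian manifolds, and reads the dichotomy off from that classification. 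The Killing-algebra, isotropy-dimension analysis you propose can be pushed through and is close in spirit to later treatments of local Killing algebras of Lorentzian $3$-manifolds, but it requires the full case-by-case bracket and curvature computations that your sketch defers.
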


Further classification results for three-dimensional Lorentzian Lie groups was obtained~in~\cite{KlepikovaSV:RSCh2006}. 


\begin{Theorem}
Let $G$ be a three-dimensional Lie group with left-invariant Lorentzian metric. Then
\begin{itemize}
\item If $G$ is unimodular, then there exists a pseudo-orthonormal frame field $\left\{e_1, e_2, e_3\right\}$, such that the metric Lie algebra of $G$ is one of the following:
\begin{enumerate}
\item
\begin{equation*}
\mathcal{A}_1:
\begin{split}
[e_1,e_2] &= \lambda_3e_3, \\
[e_1,e_3] &= -\lambda_2e_2, \\
[e_2,e_3] &= \lambda_1e_1,
\end{split}
\end{equation*}
with $e_1$ timelike;
\item
\begin{equation*}
\mathcal{A}_2:
\begin{split}
[e_1,e_2] &= \left(1-\lambda_2\right)e_3-e_2, \\ 
[e_1,e_3] &= e_3-\left(1+\lambda_2\right)e_2, \\
[e_2,e_3] &= \lambda_1e_1,
\end{split}
\end{equation*}
with $e_3$ timelike;
\item
\begin{equation*}
\mathcal{A}_3:
\begin{split}
[e_1,e_2] &= e_1-\lambda e_3, \\ 
[e_1,e_3] &= -\lambda e_2-e_1, \\
[e_2,e_3] &= \lambda_1e_1+e_2+e_3,
\end{split}
\end{equation*}
with $e_3$ timelike;
\item
\begin{equation*}
\mathcal{A}_4:
\begin{split}
[e_1,e_2] &= \lambda_3e_2, \\ 
[e_1,e_3] &= -\beta e_1-\alpha e_2, \\
[e_2,e_3] &= -\alpha e_1+\beta e_2,
\end{split}
\end{equation*}
with $e_1$ timelike and $\beta \ne 0$.
\end{enumerate}
\item If $G$ is non-unimodular, then there exists a pseudo-orthonormal frame field $\left\{e_1, e_2, e_3\right\}$, such that the metric Lie algebra of $G$ is one of~the~following:
\begin{enumerate}
\item
\begin{equation*}
\mathcal{A}:
\begin{split}
[e_1,e_2] &= 0, \\ 
[e_1,e_3] &= \lambda \sin\varphi\,e_1-\mu\cos\varphi\,e_2, \\
[e_2,e_3] &= \lambda \cos\varphi\,e_1+\mu\sin\varphi\,e_2,
\end{split}
\end{equation*}
with $e_3$ timelike and $\sin\varphi\ne 0$, $\lambda+\mu \ne 0$, $\lambda \geqslant 0$, $\mu \geqslant 0$;
\item
\begin{equation*}
\mathcal{B}: 
\begin{split}
[e_1,e_2] &= 0, \\ 
[e_1,e_3] &= te_1-se_2, \\
[e_2,e_3] &= pe_1+qe_2,
\end{split}
\end{equation*}
with $\left\langle e_2,e_2\right\rangle=-\left\langle e_1,e_3\right\rangle=1$ and otherwise zero, and $q \ne t$;
\item
\begin{equation*}
\mathcal{C}_1: 
\begin{split} 
[e_1,e_2] &= 0, \\ 
[e_1,e_3] &= se_1+pe_2, \\
[e_2,e_3] &= pe_1+qe_2,
\end{split}
\end{equation*}
with $e_2$ timelike and $q\ne s$;
\item
\begin{equation*}
\mathcal{C}_1: 
\begin{split} 
[e_1,e_2] &= 0, \\ 
[e_1,e_3] &= qe_1-re_2, \\
[e_2,e_3] &= pe_1+qe_2,
\end{split}
\end{equation*}
with $e_2$ timelike and $q\ne 0$, $p+r \ne 0$.
\end{enumerate}
\end{itemize}
\end{Theorem}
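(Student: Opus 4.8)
The plan is to follow Milnor's strategy for classifying left-invariant metrics on three-dimensional Lie groups, adapted to Lorentzian signature. The first step is to split according to unimodularity: the map $X \mapsto \mathrm{tr}(\mathrm{ad}_X)$ is a linear functional on the Lie algebra $\mathfrak{g}$, and its kernel $\mathfrak{u}$ is always an ideal. One has $G$ unimodular precisely when this functional vanishes identically, i.e. $\mathfrak{u}=\mathfrak{g}$; otherwise $\dim\mathfrak{u}=2$.

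For the unimodular case I would introduce the Lorentzian cross product $\times$, defined by $\langle X \times Y, Z\rangle = \omega(X,Y,Z)$ for the metric volume form $\omega$, and encode the bracket as $[X,Y]=L(X\times Y)$ for a unique endomorphism $L$ of $\mathfrak{g}$. A direct computation shows that $L$ is self-adjoint with respect to $\langle\cdot,\cdot\rangle$ if and only if $\mathfrak{g}$ is unimodular. The problem then reduces to the normal forms of a self-adjoint operator on a three-dimensional Lorentzian vector space. Unlike the Riemannian situation, such an operator need not admit an orthonormal eigenbasis; its possible real canonical forms are exactly the four Segre types listed in the Introduction. Expressing $L$ in the associated pseudo-orthonormal frame and reading off $[e_i,e_j]=L(e_i\times e_j)$ produces the four bracket tables $\mathcal{A}_1$–$\mathcal{A}_4$, with the parameters $\lambda_i,\alpha,\beta,\varphi$ recording the eigenvalue and Jordan data of $L$.

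For the non-unimodular case $\mathfrak{u}$ is a two-dimensional ideal, and a short computation with the trace functional shows that every element of $\mathfrak{u}$ has traceless adjoint, which forces $\mathfrak{u}$ to be abelian. Hence the whole algebra is encoded by the single operator $\mathrm{ad}_{e_3}\big|_{\mathfrak{u}}$, where $e_3$ spans a complement to $\mathfrak{u}$. I would first determine the causal character of the metric restricted to $\mathfrak{u}$ (definite, Lorentzian, or degenerate), choose a basis of $\mathfrak{u}$ adapted to it, and bring $\mathrm{ad}_{e_3}$ into its real canonical form relative to that induced inner product. The distinct canonical forms yield the remaining non-unimodular families, and the stated inequalities such as $q\ne t$, $\sin\varphi\ne 0$, $\lambda+\mu\ne 0$ are precisely the conditions ensuring non-unimodularity and eliminating degenerate overlaps with the earlier cases.

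The main obstacle is the enumeration of canonical forms in indefinite signature. In the Riemannian case every self-adjoint operator diagonalizes in an orthonormal basis, so this part of Milnor's argument collapses to a single family; here one must instead work through the full Jordan-type classification of self-adjoint (respectively arbitrary) operators with respect to a Lorentzian form, carefully tracking the causal type of eigenvectors and invariant planes. The delicate points are verifying that the four Segre types exhaust the unimodular possibilities and that the resulting normal forms are pairwise non-isometric, so that the list is simultaneously complete and irredundant.
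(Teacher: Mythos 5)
The paper itself offers no proof of this theorem --- it is imported from the classification in \cite{KlepikovaSV:RSCh2006} (see also \cite{KlepikovaSV:CP1997,KlepikovaSV:Calvaruso2007}), and the argument there is exactly the Milnor-style one you outline: in the unimodular case the bracket is encoded by the operator $L$ with $[X,Y]=L(X\times Y)$, which is self-adjoint for the Lorentzian product precisely when $G$ is unimodular, and the four families $\mathcal{A}_1$--$\mathcal{A}_4$ are the four Segre-type canonical forms of $L$; in the non-unimodular case the unimodular kernel is a two-dimensional abelian ideal and the trichotomy spacelike/degenerate/Lorentzian for the induced metric yields $\mathcal{A}$, $\mathcal{B}$, $\mathcal{C}_1$. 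Your outline is therefore correct and coincides with the standard approach, with the sole caveat that the substantive portion --- actually enumerating the Lorentzian canonical forms, fixing the normalizations that produce the specific off-diagonal constants in $\mathcal{A}_2$, $\mathcal{A}_3$ and the null frame in $\mathcal{B}$, and checking irredundancy --- is acknowledged but not carried out.
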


\begin{Remark}
There are exactly six nonisomorphic three-dimensional unimodular Lie algebras and the corresponding types of three-dimensional unimodular Lie groups (see~\cite{KlepikovaSV:Milnor1976}). All of them are listed in the Table~\ref{tab_PastukhovaSV:Chibrikova2006} together with conditions on structure constants for~which the Lie algebra has this type. If there is a ``$-$'' in the Table~\ref{tab_PastukhovaSV:Chibrikova2006} at the intersection of the row, corresponding to the Lie algebra, and the column, corresponding to the type, then it means that this type of the basis  is impossible for given Lie algebra. For~the~case of Lie algebra $\mathcal{A}_1$ we give only the signs of the triple $\left(\lambda_1,\lambda_2,\lambda_3\right)$ up to reorder and sign change.
\end{Remark}

\begin{table*}[ht!]
\caption{Three-dimensional unimodular Lie algebras}
\label{tab_PastukhovaSV:Chibrikova2006}
\begin{center} \small
\begin{tabular}[c]{|c|c|c|c|c|} 
  \hline
  \multirow{2}*{Lie algebra} & \multicolumn{4}{c|}{Restrictions on the structure constants}  \\
   
  \cline{2-5}   & $\mathcal{A}_1$ & $\mathcal{A}_2$ & $\mathcal{A}_3$ & $\mathcal{A}_4$ \\
\hline
  $su(2)$            & $\left(+,+,+\right)$ & $-$ & $-$ & $-$ \\
\hline
  $sl(2,\mathbb{R})$ & $\left(+,+,-\right)$ & $\lambda_1 \ne 0$, $\lambda_2 \ne 0$ & $\lambda \ne 0$ & $\lambda_3 \ne 0$ \\
\hline
  $e(2)$             & $\left(+,+,0\right)$ & $-$ & $-$ & $-$ \\
\hline
  $e(1,1)$           & $\left(+,-,0\right)$ & \parbox[c]{80pt}{\centering $\lambda_1 = 0$, $\lambda_2 \ne 0$ \\ or \\ $\lambda_1 \ne 0$, $\lambda_2 = 0$} & $\lambda = 0$ & $\lambda_3 = 0$ \\
\hline
  $h$                & $\left(+,0,0\right)$ & $\lambda_1 = 0$, $\lambda_2 = 0$ & $-$ & $-$  \\
\hline
  $\mathbb{R}^{3}$   & $\left(0,0,0\right)$ & $-$ & $-$ & $-$  \\

  \hline
\end{tabular}
\end{center}
\end{table*}

\begin{Remark}
We note that similar bases was also constructed by G. Calvaruso, L.A.~Cordero and P.E. Parker in~\cite{KlepikovaSV:Calvaruso2007,KlepikovaSV:CP1997}.
\end{Remark}

The following classification result for the case of three-dimensional Lorentzian locally symmetric space was proved in~\cite{KlepikovaSV:Calvaruso2007}.

\begin{Theorem} \label{th:loc_sym}
A connected, simply connected three-dimensional Lorentzian locally symmetric space $(M,g)$ is locally isometric to
\begin{enumerate}
\item a Lorentzian space form $\mathbb{R}^3_1$, $\mathbb{S}^3_1$ or $\mathbb{H}^3_1$ (with zero, positive and negative sectional curvature respectively), or
\item a direct product $\mathbb{R}\times\mathbb{S}^2_1$, $\mathbb{R}\times\mathbb{H}^2_1$, ${\mathbb{S}^2\times\mathbb{R}_1}$ or $\mathbb{H}^2\times\mathbb{R}_1$, or
\item a space with a Lorentzian metric $g$, which admitted a local coordinate system $(u_1,u_2,u_3)$ such, that the metric tensor has the following form
$$ g= \begin {pmatrix} 0 & 0           & 1 \\
                       0 & \varepsilon & 0 \\
                       1 & 0           & f(u_2,u_3) \end {pmatrix},$$
where $\varepsilon=\pm 1$, $f(u_2,u_3)=u_2^2\alpha+u_2\beta(u_3)+\xi(u_3)$, $\alpha\in\mathbb{R}$, $\beta$, $\xi$ are arbitrary smooth functions.
\end{enumerate}
\end{Theorem}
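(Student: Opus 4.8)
The plan is to exploit the special feature of dimension three, namely that the Weyl tensor vanishes and hence the curvature tensor $R$ is entirely determined by the Ricci operator $\rho$ (the $(1,1)$-tensor obtained from the Ricci tensor via $g$) together with the scalar curvature. Concretely, $R$ is a Kulkarni--Nomizu-type combination of $\rho$ and $g$, so the local-symmetry condition $\nabla R=0$ is equivalent to $\nabla\rho=0$ (which in turn forces the scalar curvature to be constant). Thus the whole problem reduces to classifying three-dimensional Lorentzian manifolds carrying a parallel, $g$-self-adjoint Ricci operator.

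First I would record the algebraic constraint. Since $\rho$ is $g$-self-adjoint on a Lorentzian vector space, its generalized-eigenspace decomposition is one of the four Segre types recalled in the Introduction, and --- crucially --- $\rho$ is \emph{non-diagonalizable} precisely when it admits a degenerate (null) invariant subspace; on a non-degenerate invariant subspace a self-adjoint operator is always diagonalizable over $\mathbb{R}$. Because $\rho$ is parallel, each of its invariant subspaces is holonomy-invariant and defines a parallel distribution. This dichotomy --- the presence or absence of a parallel null line --- organizes the entire classification.

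If $\rho$ is diagonalizable, then its eigenspaces are mutually orthogonal, non-degenerate and parallel, so by the de Rham--Wu splitting theorem $M$ is locally a metric product of lower-dimensional locally symmetric factors. An irreducible three-dimensional factor must have constant curvature, yielding the space forms $\mathbb{R}^3_1,\mathbb{S}^3_1,\mathbb{H}^3_1$; a non-degenerate two-dimensional factor is automatically of constant curvature and pairs with a one-dimensional flat factor whose causal character is fixed by the metric, yielding the products $\mathbb{R}\times\mathbb{S}^2_1$, $\mathbb{R}\times\mathbb{H}^2_1$, $\mathbb{S}^2\times\mathbb{R}_1$, $\mathbb{H}^2\times\mathbb{R}_1$ of item~2. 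This step also disposes of the Segre type $\{1z\overline{z}\}$: a genuinely complex eigenvalue would require a non-degenerate invariant $2$-plane on which $\rho$ acts by a rotation, which is impossible on a symmetric $2$-plane, where $\rho$ must be a multiple of the identity.

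The remaining, genuinely Lorentzian case is $\rho$ non-diagonalizable (Segre types $\{21\}$ and $\{3\}$), where the de Rham--Wu theorem fails to produce a product because the invariant distribution is a parallel \emph{null} line. Here I would invoke Walker's theorem to obtain local coordinates $(u_1,u_2,u_3)$ adapted to the parallel null distribution, in which $g$ takes exactly the stated form with a single unknown function $f$. Substituting this ansatz into $\nabla\rho=0$ and working out the resulting system, one finds that the only surviving condition is that $f$ be at most quadratic in $u_2$ with constant leading coefficient, i.e. $f(u_2,u_3)=\alpha u_2^2+\beta(u_3)u_2+\xi(u_3)$, which is item~3. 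I expect this last case to be the main obstacle: unlike the diagonalizable case it cannot be reduced to a product, the null direction has to be tracked by hand, and establishing the precise normal form for $f$ requires carefully translating the parallel-curvature equations into a PDE and integrating it.
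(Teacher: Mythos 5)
The paper does not prove Theorem~\ref{th:loc_sym} at all: it is quoted as a known classification from the cited work of Calvaruso (and, ultimately, of Chaichi, Garc\'ia-R\'io et al.\ on Walker metrics), so there is no internal proof to compare against. Your proposal reconstructs essentially the standard argument from that literature: in dimension three $\nabla R=0$ is equivalent to the Ricci operator being parallel; a case split on the Segre type of $\rho$ then yields the space forms and de Rham--Wu products when the relevant parallel distributions are non-degenerate, and Walker coordinates plus the explicit computation of $\nabla\rho=0$ (forcing $f$ to be quadratic in $u_2$ with constant leading coefficient) in the degenerate case. This is the right route, and you correctly identify the non-diagonalizable case as the one where the real work happens.

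Two caveats. First, your auxiliary claim that a $g$-self-adjoint operator is always $\mathbb{R}$-diagonalizable on a non-degenerate invariant subspace is false: on a Lorentzian $2$-plane a self-adjoint operator can have a complex-conjugate pair of eigenvalues (Segre type $\{z\overline{z}\}$), so the dichotomy ``non-diagonalizable $\Leftrightarrow$ null invariant subspace'' does not hold as stated. This does not sink the proof, because you separately dispose of type $\{1z\overline{z}\}$ by splitting off the spacelike real eigenline and observing that the Ricci operator of a surface is a multiple of the identity --- but that separate argument is then what actually carries the case, and the false general claim should be dropped. Second, the final step is only sketched: you still owe the verification that the Walker normal form can be chosen with $f$ independent of $u_1$, the derivation of the PDE on $f$ from $\nabla\rho=0$, and the converse check that every metric of the form in item~3 is in fact locally symmetric. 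These are routine but non-trivial computations, and the theorem is not proved until they are done.
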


\section{Three-dimensional Lorentzian Lie groups}


Further, by ``a three-dimensional Lorenzian Lie group $\left(G,\mathfrak{g}\right)$'' we shall mean a~three\nobreakdash-di\-men\-sional Lie group $G$, which equipped with a left-invariant Lorentziam metric~$g$ and having metric Lie algebra $\mathfrak{g}$. Now we can prove the following

\begin{Theorem} \label{th:A2}
A three-dimensional unimodular Lorentzian Lie group $\left(G,\mathcal{A}_2\right)$ with the~sectional curvature operator $\mathcal{K}$ exist if and only if 
\begin{enumerate}
\item $\mathcal{K}$ has the Segre type $\{111\}$ with the eigenvalues $ k_1=- k_2=- k_3 \geqslant 0$  up to renumeration, or     
\item $\mathcal{K}$ has the Segre type $\{12\}$ with the eigenvalues
\begin{enumerate} 
\item $ k_1 =  k_2 =0$, or  
\item $ k_2 <0$.  
\end{enumerate}
\end{enumerate}
\end{Theorem}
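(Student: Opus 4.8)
The plan is to compute the sectional curvature operator $\mathcal{K}$ explicitly as a $3\times 3$ matrix in the bivector basis $\{e_1\wedge e_2,\, e_1\wedge e_3,\, e_2\wedge e_3\}$ and then read off its admissible Segre types as a function of the parameters $\lambda_1,\lambda_2$. First I would use the Koszul formula, which for a left-invariant metric reduces to $2\langle\nabla_{e_i}e_j,e_k\rangle = \langle[e_i,e_j],e_k\rangle - \langle[e_j,e_k],e_i\rangle + \langle[e_k,e_i],e_j\rangle$, to obtain every connection coefficient $\nabla_{e_i}e_j$ directly from the bracket relations of $\mathcal{A}_2$. The only subtlety here is the Lorentzian signature $\langle e_3,e_3\rangle=-1$, which flips a sign whenever the third index is raised. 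From the $\nabla_{e_i}e_j$ I would assemble $R(e_i,e_j)e_k$ via the stated convention $R(X,Y)Z=[\nabla_Y,\nabla_X]Z+\nabla_{[X,Y]}Z$ and hence the components $R(e_i,e_j,e_k,e_l)$.

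Next I would record that the induced scalar product on $\Lambda^2 M$ in this basis is $\operatorname{diag}(1,-1,-1)$, so that the matrix $K$ of $\mathcal{K}$ is recovered from $\langle e_i\wedge e_j,\,\mathcal{K}(e_k\wedge e_l)\rangle = R(e_i,e_j,e_k,e_l)$ by raising indices with this indefinite metric. Equivalently, writing $\eta=\operatorname{diag}(1,-1,-1)$, the product $\eta K$ is symmetric; this self-adjointness with respect to a Lorentzian form on $\Lambda^2 M$ is exactly what forces $K$ into one of the four Segre types listed in the introduction. With the explicit $K=K(\lambda_1,\lambda_2)$ in hand, I would compute its characteristic polynomial and track how the root pattern and Jordan structure vary with the parameters.

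For the necessity direction I expect the explicit entries to show that the complex case (type $\{1z\overline{z}\}$) and the maximally degenerate case (type $\{3\}$) are excluded identically in $\lambda_1,\lambda_2$, leaving only type $\{111\}$ and the single size-$2$ Jordan block case; the relation $k_1=-k_2=-k_3$ in the diagonalizable regime and the constraints $k_1=k_2=0$ or $k_2<0$ in the Jordan regime should then fall out of the factored characteristic polynomial. For sufficiency I would invert this correspondence: given eigenvalues meeting the stated conditions, solve the resulting low-degree equations for admissible $(\lambda_1,\lambda_2)$, so that every prescribed configuration (up to renumeration) is actually realized by some metric Lie algebra $\mathcal{A}_2$. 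The main obstacle I anticipate is the Jordan-block analysis, since separating type $\{21\}$ from type $\{111\}$ requires checking the geometric multiplicity, that is the rank of $K-kI$, rather than merely the characteristic polynomial; pinning down precisely the parameter loci on which subcase 2(a) as opposed to 2(b) occurs, and confirming that no forbidden type slips through on some lower-dimensional degeneration, will demand careful bookkeeping of the borderline subcases.
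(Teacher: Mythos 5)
Your proposal follows essentially the same route as the paper: compute the matrix of $\mathcal{K}$ explicitly in the bivector basis from the bracket relations of $\mathcal{A}_2$, observe that it splits into a $1\times 1$ block and a $2\times 2$ block whose Jordan structure (diagonal exactly when $\lambda_1=2\lambda_2$, otherwise a single Jordan block with eigenvalue $-\frac14\lambda_1^2\leqslant 0$) yields the stated Segre types, and then solve back for $(\lambda_1,\lambda_2)$ to get sufficiency. The paper simply records the resulting matrix and its Jordan form without displaying the Koszul-formula computation, so your plan, once the computation is carried out, reproduces its argument.
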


\begin{proof}
In this case the matrix of the sectional curvature operator $\mathcal{K}$ has the following form
\begin{equation*}
\mathcal{K}=\begin {pmatrix}
\frac34\lambda_1^2-\lambda_1\lambda_2 & 0 & 0 \\
0 & 2\lambda_2-\lambda_1-\frac14\lambda_1^2 & 2\lambda_2-\lambda_1 \\
0 & -2\lambda_2+\lambda_1 & -2\lambda_2+\lambda_1-\frac14\lambda_1^2
\end {pmatrix}.
\end{equation*}

If $\lambda_1=2\lambda_2$, then the matrix of the sectional curvature operator has the~diagonal form with the eigenvalues $k_1=-k_2=-k_3\geqslant0$. Else, the matrix of the sectional curvature operator $\mathcal{K}$ has the following Jordan form:
$$\mathcal{K}=\begin {pmatrix}
-\lambda_1\lambda_2+\frac34\lambda_1^2 & 0 & 0 \\
0 & -\frac14\lambda_1^2 & 1 \\
0 & 0 & -\frac14\lambda_1^2
\end {pmatrix},$$
and the eigenvalues are equal to
\begin{gather*}
 k_1= -\lambda_1\lambda_2+\frac34\lambda_1^2, \quad  k_2= -\frac14\lambda_1^2 \leqslant 0.
\end{gather*}

If $ k_2=0$, then $\lambda_1=0$ and all of the eigenvalues are equal to zero. 

Suppose that $ k_2<0$. Then, this follows that $\lambda_1 = \pm2\sqrt{- k_2}$. Expressing~$\lambda_2$, we find
$$ \lambda_2 = \mp \frac{ k_1+3 k_2}{2\sqrt{- k_2}}. $$ 
\end{proof}

The remaining cases are concerned in a similar way.

\section{Three-dimensional Lorentzian locally symmetric spaces}

The Theorem~\ref{th:loc_sym} allows us to divide the problem of studying the curvature operators on~three-dimensional locally symmetric Lorentzian manifolds by~three subtasks. At~the~same time, it is obvious that the sectional curvature operator $\mathcal{K}$ is diagonalizable for~Lorentzian manifolds of constant sectional survature $\mathbb{R}^3_1$, $\mathbb{S}^3_1$ and $\mathbb{H}^3_1$ (i.e. $\mathcal{K}$ has the Segre type~$\{111\}$) and $\mathcal{K}$ has three equal eigenvalues (zero, positive or negative respectively). 

In the case of direct products (case 2 of the Theorem~\ref{th:loc_sym}) the sectional curvature operator $\mathcal{K}$ has the Segre type~$\{111\}$ with two zero and third non-zero eigenvalues.

Therefore, only the case 3 of Theorem~\ref{th:loc_sym} is of interest, in which the~metric tensor has the following form in local coordinate system 
$$ g= \begin {pmatrix} 0 & 0           & 1 \\
                       0 & \varepsilon & 0 \\
                       1 & 0           & f(u_2,u_3) \end {pmatrix},$$
where $\varepsilon=\pm 1$, $f(u_2,u_3)=u_2^2\alpha+u_2\beta(u_3)+\xi(u_3)$, $\alpha\in\mathbb{R}$, $\beta$, $\xi$ are arbitrary smooth functions.

Calculating the matrix of the sectional curvature operator $\mathcal{K}$, we have
\begin{equation*}
\mathcal{K}= \begin {pmatrix} 0 & 0 & \frac{\alpha}{\varepsilon} \\
                              0 & 0 & 0 \\
                              0 & 0 & 0 \end {pmatrix}.
\end{equation*}
Therefore, either the sectional curvature operator $\mathcal{K}$ has the~Segre type $\{111\}$ with  the~eigenvalues $k_1=k_2=k_3=0$ for $\alpha=0$, or $\mathcal{K}$ has the Segre type~$\{12\}$ with the~eigenvalues $k_1=k_2=0$ for $\alpha\ne0$. Hence, the following theorem holds.

\begin{Theorem} \label{th:K_loc_sym}
A connected, simply connected three-dimensional Lorentzian locally symmetric space with the sectional curvature operator $\mathcal{K}$ exist if and only if
\begin{enumerate}
\item $\mathcal{K}$ has the Segre type $\{111\}$ with the equal eigenvalues, or
\item $\mathcal{K}$ has the Segre type $\{111\}$ with the two zero eigenvalues and one nonzero, or
\item $\mathcal{K}$ has the Segre type $\{12\}$ with the zero eigenvalues.
\end{enumerate} 
\end{Theorem}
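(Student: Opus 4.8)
The plan is to invoke Theorem~\ref{th:loc_sym} to reduce the classification into its three constituent cases, and to treat each case by direct computation of the sectional curvature operator $\mathcal{K}$ on $\Lambda^2 M$. First I would handle the space forms $\mathbb{R}^3_1$, $\mathbb{S}^3_1$, $\mathbb{H}^3_1$: here the curvature tensor is $R(X,Y,Z,T)=c\left(\langle X,Z\rangle\langle Y,T\rangle-\langle X,T\rangle\langle Y,Z\rangle\right)$ for a constant $c$, so the induced operator on $\Lambda^2 M$ is $c$ times the identity. This gives $\mathcal{K}=c\cdot\mathrm{Id}$, which is automatically Segre type $\{111\}$ with three equal eigenvalues (zero, positive, or negative according to the sign of $c$), establishing item~(1).

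Next I would treat the direct products of item~(2) of Theorem~\ref{th:loc_sym}. For a product $N^2\times L^1$ the curvature tensor splits: only the two-dimensional factor contributes, so on the three-dimensional space of bivectors exactly one direction (the bivector spanning the tangent plane to the surface factor) carries the sectional curvature of $N^2$, while the two mixed bivectors involving the flat line factor yield zero. Hence $\mathcal{K}$ is diagonalizable with eigenvalues $(k,0,0)$, which is Segre type $\{111\}$ with two zero eigenvalues and one nonzero, establishing item~(2).

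The remaining and genuinely computational case is item~(3), the metric with null coordinate $u_1$ and $f(u_2,u_3)=u_2^2\alpha+u_2\beta(u_3)+\xi(u_3)$. Here I would compute the Christoffel symbols from the explicit $g$, assemble the curvature tensor $R$, and then express $\mathcal{K}$ as a matrix with respect to a basis of $\Lambda^2 M$ adapted to the coordinate frame. The excerpt already records the outcome: after simplification the only surviving component produces the nilpotent matrix displayed in the text, namely $\mathcal{K}$ with a single nonzero off-diagonal entry $\alpha/\varepsilon$. Reading off its invariants, when $\alpha=0$ the operator is identically zero and of Segre type $\{111\}$ with all eigenvalues zero, whereas for $\alpha\ne 0$ the matrix has rank one, a single eigenvalue $0$ of algebraic multiplicity three but geometric multiplicity two, hence a nontrivial Jordan block — this is precisely Segre type $\{12\}$ with zero eigenvalues, establishing item~(3).

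The main obstacle is the curvature computation in case~(3): the metric is neither diagonal nor constant, the coordinate $u_1$ is null, and $f$ depends on the arbitrary functions $\beta(u_3)$ and $\xi(u_3)$, so one must verify that all the $\beta$- and $\xi$-dependent terms cancel and that only the constant $\alpha$ survives in $\mathcal{K}$. Once that cancellation is confirmed and the Segre type of the resulting nilpotent operator is correctly identified, the three cases assemble into the stated classification, and conversely every listed operator is realized by an appropriate choice of $c$, of the surface factor, or of $\alpha$, which gives the ``only if'' direction together with the existence direction.
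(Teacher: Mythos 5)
Your proposal is correct and follows essentially the same route as the paper: reduce via Theorem~\ref{th:loc_sym} to the three cases, note that the space forms give $\mathcal{K}=c\cdot\mathrm{Id}$ and the products give a diagonal operator with eigenvalues $(k,0,0)$, and compute the matrix of $\mathcal{K}$ explicitly for the metric of case~3, obtaining the rank-one nilpotent matrix with entry $\alpha/\varepsilon$ whose Segre type is $\{111\}$ for $\alpha=0$ and $\{12\}$ with zero eigenvalues for $\alpha\ne 0$. The paper carries out exactly these steps (stating the resulting matrix without printing the intermediate Christoffel-symbol computation), so no further comparison is needed.
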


\section{Sectional curvature operator of locally homogeneous Lorentzian 3-manifolds}

In this section, using the results of the previous sections, we determine under which conditions the different Segre types occur for the sectional curvature operator of a~three\nobreakdash-dimensional locally homogeneous Lorentzian manifold. 
Next theorems follows from the results on the cases of metric Lie groups and of the locally symmetric spaces.

\begin{Theorem} \label{th:loc_hom_nd}
A connected, simply connected three-dimensional Lorentzian locally homogeneous manifold $(M, g)$ with non-diagonalizable sectional curvature operator $\mathcal{K}$ exist if and only if $\mathcal{K}$ satisfies one of the following conditions:
\begin{enumerate}
\item $\mathcal{K}$ has the Segre type $\{12\}$ and
\begin{enumerate}
\item eigenvalues equal to zero, or 
\item $k_2<0$;
\end{enumerate}

\item $\mathcal{K}$ has the Segre type $\{3\}$ with negative eigenvalue;

\item $\mathcal{K}$ has the Segre type $\{1z\overline{z}\}$ and 
\begin{enumerate}
\item complex eigenvalues have negative real part, or
\item $0 \leqslant \frac{k_2+ k_3}{2} < -k_1$.
\end{enumerate}
\end{enumerate}
\end{Theorem}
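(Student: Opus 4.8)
The plan is to derive this classification as a synthesis of the model-by-model computations already carried out, keeping only the operators that fail to be diagonalizable. The starting point is Calvaruso's structure theorem (the first theorem of Section~2): every connected, simply connected, complete locally homogeneous Lorentzian $3$-manifold is either locally symmetric or locally isometric to a three-dimensional Lie group with a left-invariant Lorentzian metric. Consequently the set of non-diagonalizable operators $\mathcal{K}$ realized by such manifolds is exactly the union of those realized on the locally symmetric models and those realized on the metric Lie group models, and it suffices to compute each contribution and take the union.

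First I would extract the locally symmetric contribution. By Theorem~\ref{th:K_loc_sym} the admissible operators on a locally symmetric space are of Segre type $\{111\}$ (two subcases, both diagonalizable) or of Segre type $\{12\}$ with all eigenvalues zero. Discarding the diagonalizable ones leaves a single non-diagonalizable form, namely Segre type $\{12\}$ with zero eigenvalues, which is precisely item~1(a) of the statement.

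Next I would run through the metric Lie group models listed in the classification theorem of Section~2, treating the unimodular algebras $\mathcal{A}_1,\dots,\mathcal{A}_4$ and the non-unimodular algebras $\mathcal{A}$, $\mathcal{B}$, $\mathcal{C}_1$, $\mathcal{C}_2$ in turn. For each algebra the matrix of $\mathcal{K}$ in the given pseudo-orthonormal frame is an explicit expression in the structure constants, exactly as computed for $\mathcal{A}_2$ in the proof of Theorem~\ref{th:A2}; from it one reads off the Segre type, isolates the parameter regime in which $\mathcal{K}$ is non-diagonalizable, and inverts the eigenvalue--structure-constant relations to phrase admissibility purely in terms of the $k_i$. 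The algebra $\mathcal{A}_2$ already supplies Segre type $\{12\}$ with either all eigenvalues zero or $k_2<0$, giving items~1(a) and~1(b). The remaining non-diagonalizable Jordan forms --- the single $3\times 3$ nilpotent-plus-scalar block of type $\{3\}$ and the complex-conjugate type $\{1z\overline z\}$ --- arise from the algebras whose structure constants carry a genuine shear or rotational part; from their eigenvalue formulas I would extract the sign condition of item~2 (negative eigenvalue for type $\{3\}$) and the two conditions of items~3(a) and~3(b) for type $\{1z\overline z\}$.

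Finally I would assemble the equivalence. The implication ($\Leftarrow$) is settled by exhibiting, for each listed form, concrete structure constants in the appropriate algebra that realize it; the implication ($\Rightarrow$) is settled by completeness, since every model has been examined and each non-diagonalizable operator it supports falls into the stated list. The main obstacle I anticipate is precisely this last bookkeeping: verifying that the constraints produced by the various algebras coincide or nest into exactly three families, that no algebra yields a non-diagonalizable operator outside them, and in particular that after eliminating the structure constants the region in item~3(b) is sharply $0\leqslant\frac{k_2+k_3}{2}<-k_1$ rather than a strictly larger or smaller set. Establishing these inequalities as sharp --- both realizable and necessary --- is where the genuine work lies, the Segre-type determination and the inversion of eigenvalue relations being routine linear algebra.
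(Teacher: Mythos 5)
Your proposal follows essentially the same route as the paper: reduce via Calvaruso's structure theorem to the locally symmetric models (handled by Theorem~\ref{th:K_loc_sym}) and the metric Lie group models (handled by case-by-case computation of $\mathcal{K}$ as in Theorem~\ref{th:A2} and its analogues), then take the union of the non-diagonalizable forms. The paper's own proof is exactly this synthesis, stated even more tersely, with the bulk of the per-algebra computations omitted as ``concerned in a similar way.''
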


%
%

\begin{Theorem}
A connected, simply connected three-dimensional Lorentzian locally homogeneous manifold $(M, g)$ with sectional curvature operator $\mathcal{K}$ with Segre type~$\{111\}$ exist if and only if eigenvalues $k_1$, $k_2$, $k_3$ satisfy one (or more than one) of the following conditions:
\begin{enumerate}
\item all eigenvalues are equal to each other;
\item two eigenvalues are equal to zero and third is nonzero;
\item exactly two of $k_1 + k_2$, $k_1 + k_3$ and $k_2 + k_3$ are equal to zero;
\item $\left(k_1 + k_2\right)\left(k_1 + k_3\right)\left(k_2 + k_3\right)<0$;
\item up to renumeration $$k_2k_3 \leqslant k_1^2<\left(\frac{k_2+k_3}{2}\right)^2\; \text{ and } \;k_1<\frac{k_2+k_3}{2};$$
\item up to renumeration $$k_2<0,\; k_3<0,\; \left|k_1\right|\leqslant\sqrt{k_2k_3};$$
\item up to renumeration $$k_1 < -\left|\frac{k_2+k_3}{2}\right|.$$
\end{enumerate}
\end{Theorem}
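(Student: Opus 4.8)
The plan is to invoke Calvaruso's structure theorem, by which $(M,g)$ is either locally symmetric or locally isometric to a three-dimensional Lie group carrying a left-invariant Lorentzian metric, and to treat the two alternatives separately. The locally symmetric case is already settled: by Theorem~\ref{th:K_loc_sym} the only operators of Segre type $\{111\}$ arising there are those with all eigenvalues equal or with two zero eigenvalues and one nonzero, which are exactly conditions (1) and (2). Hence everything beyond (1)--(2) must be extracted from the Lie group case, and the strategy there mirrors the computation carried out for $\mathcal{A}_2$ in Theorem~\ref{th:A2}.

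For the Lie group case I would proceed type by type through the classification of metric Lie algebras, i.e. the unimodular algebras $\mathcal{A}_1,\dots,\mathcal{A}_4$ and the non-unimodular ones $\mathcal{A},\mathcal{B},\mathcal{C}_1$. For each I would first compute the Levi-Civita connection from the Koszul formula, then the curvature tensor $R$, and finally write the matrix of $\mathcal{K}$ in the basis $\{e_1\wedge e_2,\,e_1\wedge e_3,\,e_2\wedge e_3\}$ of $\Lambda^2 M$, exactly as was done to obtain the displayed matrix in the proof of Theorem~\ref{th:A2}. For the frames in which this matrix is block-diagonal with a genuinely diagonalizable block (the diagonal types, such as $\mathcal{A}_1$), the operator is automatically of Segre type $\{111\}$ and its eigenvalues are explicit quadratic expressions in the structure constants; for the remaining types one must decide, as in the $\lambda_1=2\lambda_2$ dichotomy for $\mathcal{A}_2$, exactly when the off-diagonal block degenerates so that the Segre type is $\{111\}$ rather than $\{12\}$, $\{3\}$ or $\{1z\overline{z}\}$. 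The subcase of $\mathcal{A}_2$ with $\lambda_1=2\lambda_2$ already supplies triples of the form $k_1=-k_2=-k_3$, for which precisely two of the three pairwise sums vanish, and this is exactly condition (3).

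The heart of the argument is then the inverse problem. For each type the Segre-$\{111\}$ locus gives a quadratic map from the admissible structure constants (subject to the sign and nondegeneracy constraints of the classification theorem and Table~\ref{tab_PastukhovaSV:Chibrikova2006}, e.g. the prescribed sign patterns of $(\lambda_1,\lambda_2,\lambda_3)$ for $\mathcal{A}_1$, or $\beta\ne0$, $q\ne t$ in the non-unimodular cases) into the space of eigenvalue triples $(k_1,k_2,k_3)$. I would invert each such map, determine exactly which triples are realized, and take the union over all types. The claim is that this union, after removing the symmetric-space contributions (1)--(2), is described precisely by conditions (3)--(7): the inequalities involving $k_2k_3$, $\sqrt{k_2k_3}$ and $\bigl(\tfrac{k_2+k_3}{2}\bigr)^2$ are exactly the solvability conditions for the relevant quadratic systems, while the product condition (4) together with (7) encodes the sign obstructions coming from the timelike direction.

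The main obstacle I anticipate is this inverse problem for the three-parameter family $\mathcal{A}_1$ (and, to a lesser extent, the two-parameter non-unimodular families). Because the six distinct real Lie algebras $su(2)$, $sl(2,\mathbb{R})$, $e(2)$, $e(1,1)$, $h$, $\mathbb{R}^3$ correspond to different sign patterns of $(\lambda_1,\lambda_2,\lambda_3)$ inside the single normal form $\mathcal{A}_1$, the admissible parameter region is a union of chambers, and one must verify that sweeping over all of them produces exactly the stated inequality regions with no gap and no spurious extra triple. Keeping track of which targets are attained in several ways — and hence of the genuine overlaps among conditions (3)--(7), which are substantial once one allows the ``up to renumeration'' relabelling — while respecting the strict constraints such as $\beta\ne0$ or $q\ne t$ that can delete boundary triples, is the delicate bookkeeping on which the sharpness of the ``if and only if'' ultimately rests.
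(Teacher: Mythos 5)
Your plan coincides with the paper's own (largely implicit) argument: the paper likewise splits via Calvaruso's structure theorem into the locally symmetric case, handled by Theorem~\ref{th:K_loc_sym}, and the metric Lie algebra case, handled by computing the matrix of $\mathcal{K}$ type by type as in the proof of Theorem~\ref{th:A2} and then solving the inverse eigenvalue problem, with the final theorem stated as the union of the resulting conditions. Your outline is consistent with this and, if the case-by-case computations you describe were carried out, would yield the paper's proof; note that the paper itself only exhibits the $\mathcal{A}_2$ computation and asserts that ``the remaining cases are concerned in a similar way.''
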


%

\Addresses


\begin{thebibliography}{99}

\bibitem{KlepikovaSV:Koval-Ni1996}
Kowalski O., Nikcevic S., \textit{On Ricci eigenvalues of locally homogeneous Riemann 3\nobreakdash-manifolds}, Geom. Dedicata, \textnumero\,1, 65--72 (1996).

\bibitem{KlepikovaSV:CalKow2009}
Calvaruso G., Kowalski O., \textit{On the Ricci operator of locally homogeneous Lorentzian 3-manifolds}, Cent. Eur. J. Math., V.\,7(1), 124--139 (2009).
  
\bibitem{KlepikovaSV:Calvaruso2008}
Calvaruso G., \textit{Pseudo-Riemannian 3-manifolds with prescribed distinct constant Ricci eigenvalues}, Diff. Geom. Appl., V.\,26, 419--433 (2008).

\bibitem{KlepikovaSV:Kowalski1993}
Kowalski O., \textit{Nonhomogeneous Riemannian 3-manifolds with distinct constant Ricci eigenvalues}, Nagoya Math. J., V.\,132, 1--36 (1993).



\bibitem{KlepikovaSV:OR2013}
Oskorbin D.N., Rodionov E.D., \textit{On the spectrum of the curvature operator of~a~three\nobreakdash-di\-men\-sional Lie group with a left-invariant Riemannian metric}, Doklady Mathematics, V.\,87(3), 307--309 (2013).

\bibitem{KlepikovaSV:GO2013}
Gladunova O.P., Oskorbin D.N., \textit{An Application of Symbolic Computation Packages to the Investigation of the Curvature Operator Spectrum on the Metric Lie Groups}, The News of ASU, \textnumero\,1-1(77), 19--23 (2013). [Russian]

\bibitem{KlepikovaSV:Segre_types_2000} 
Bueken P., Djori\'{c} M., \textit{Three-dimensional Lorentz metrics and curvature homogeneity of order one}, Ann. Glob. Anal. Geom., V.\,18, 85--103 (2000).


  
\bibitem{KlepikovaSV:Calvaruso2007}
Calvaruso G., \textit{Homogeneous structures on three-dimensional Lorentzian manifolds}, J.~Geom. Phys., V.\,57, 1279--1291 (2007).

\bibitem{KlepikovaSV:RSCh2006}
Rodionov E.D., Slavskii V.V., Chibrikova L.N., \textit{Locally conformally homogeneous pseudo-Riemannian spaces}, Siberian Advances in Mathematics, V.\,17(3), 186---212 (2007).

\bibitem{KlepikovaSV:Milnor1976}
Milnor J., \textit{Curvature of left invariant metric on Lie groups}, Advances in mathematics, V.\,21, 293--329 (1976).

\bibitem{KlepikovaSV:CP1997} 
Cordero L.A., Parker P.E., \textit{Left-invariant Lorentzian metrics on 3-dimensional Lie groups}, Rend. Mat., V.\,17, 129--155 (1997).




\end{thebibliography}
\end{document}